\newtheorem{theorem}{Theorem}[section]
\newtheorem{lemma}[theorem]{Lemma}
\newtheorem{proposition}[theorem]{Proposition}
\newtheorem{ex}{Exercise}[section]
\newtheorem{examp}[theorem]{Example}
\newtheorem{remar}[theorem]{Remark}
\newcommand{\n}{\par\noindent}
\newcommand{\sn}{\par\smallskip\noindent}
\newcommand{\pars}{\par\smallskip}
\newcommand{\parm}{\par\medskip}
\newcommand{\chara}{\mbox{\rm char}\,}
\newcommand{\bfind}[1]{\index{#1}{\bf #1}}
\newcommand{\sep}{^{\rm sep}}
\newcommand{\Gal}{\mbox{\rm Gal}\,}
\newcommand{\R}{\mathbb R}
\newcommand{\cO}{\mathcal{O}}
\begin{document}
\title[Subfields of Kaplansky fields]{Subfields of algebraically maximal Kaplansky fields}
\thanks{The author would like to thank Anna Blaszczok for the careful proofreading of an earlier version
of this manuscript, and Pablo Cubides Kovacsics for helpful remarks.\\
The initial work for this paper was done while the author was a visiting professor at the Institute of 
Mathematics at the University of Szczecin; he would like to thank his colleagues at this Institute for their 
great hospitality and support.}
\author{F.-V.~Kuhlmann}
\address{Institute of Mathematics, University of Szczecin, ul. Wielkopolska 15, 	  	  	
70-451 Szczecin, Poland}
\email{fvk@math.us.edu.pl}
\date{21.\ 12.\ 2017}

\subjclass[2000]{Primary 12J10; Secondary 12J25, 13A18, 12Y05}
\keywords{valued field, Kaplansky field, tame field, Newton algorithm}

\begin{abstract}
Using the ramification theory of tame and Kaplansky fields, we show that maximal Kaplansky fields contain
maximal immediate extensions of each of their subfields. Likewise, algebraically maximal Kaplansky fields
contain maximal immediate algebraic extensions of each of their subfields. This study is inspired by
problems that appear in henselian valued fields of rank higher than 1 when a Hensel root of a polynomial
is approximated by the elements generated by a (transfinite) Newton algorithm.
\end{abstract}

\maketitle

%
%
\section{Introduction}
For real functions, the Newton Algorithm is a nice tool to approximate their zeros. An analogue works in
valued fields, such as the fields of $p$-adic numbers. Take a complete discretely valued field $K$ with
valuation ring $\cO$ and a polynomial $f$ in one variable with coefficients in $\cO$. If $b\in\cO$
satisfies $vf(b)>2vf'(b)$, then set $x_0:=b$ and
\[
x_{i+1}\>:=\>x_i\,-\,\frac{f(x_i)}{f'(x_i)}
\]
for $i\geq 0$. It can easily be shown, using the Taylor expansion of $f$, that this sequence is a Cauchy
sequence, and if a limit exists, then it is a root of $f$. This fact can be used to prove Hensel's Lemma
in the complete fields of $p$-adic numbers, which states the existence of $p$-adic roots
of suitable polynomials.

Hensel's Lemma also holds in other valued fields (such as power series fields
with arbitrary ordered abelian groups of exponents), which can be much larger than the fields of $p$-adic
numbers. Still, it can be proved using the Newton algorithm, but if the value group of the field is not
archimedean ordered (i.e., if it is not an ordered subgroup of $\R$), then the
algorithm may not deliver the root after the first $\omega$ many iterations and transfinite induction is
needed. For those who do not like the technicalities of transfinite induction, S.~Prie{\ss}-Crampe
presented in 1990 an elegant alternative (see \cite{[P]}). The setting of the Newton algorithm gives rise
to a contracting function
\[
\Phi(x)\>:=\> x\,-\,\frac{f(x)}{f'(x)}
\]
defined on a suitable subset of the field, whose unique fixed point, if it exists, is a root of the
polynomial. The existence is guaranteed if $f$ satisfies the assumptions of Hensel's Lemma and the
valued field from which its coefficients are taken is spherically complete (which is a property shared
by all power series fields). While it is not the original definition, it suffices here to say that a valued
field is spherically complete if and only if every pseudo Cauchy sequence in the sense of \cite{[Ka]}
has a (pseudo) limit.

The members of the sequence obtained from the Newton algorithm are approximations to a root of $f$. If the
value group of the field in which we are working is not archimedean ordered, then the problem can appear
that the sequence is not Cauchy, but only a pseudo Cauchy sequence. The limits of pseudo Cauchy
sequences which are not Cauchy are not uniquely determined.
In a recent paper \cite{[P2]}, Prie{\ss}-Crampe considers the question whether this situation can be
repaired, for a given polynomial $f$, by passing to a suitable subfield of $K$ in which the sequence
becomes Cauchy and has a limit. She proves that this can indeed be done when $K$ is a spherically complete
Kaplansky field. Let us provide the necessary definitions and background.

\pars
We will work with (Krull) valuations $v$ and write them in the classical additive way, that
is, the value group of $v$ on a field $K$, denoted by $vK$, is an additively written ordered abelian group,
and the ultrametric triangle law reads as $v(a+b)\geq\min \{va,vb\}$. We denote by $Kv$ the residue field
of $v$ on $K$, by $va$ the value of an element $a\in K$, and by $av$ its residue.

A polynomial $f$ over a field of characteristic $p>0$ is called \bfind{additive} if $f(a+b)=f(a)+f(b)$
for all elements $a,b$ in any extension of the field. This holds if and only if $f$ is of the form
$\sum_{0\leq i\leq n} c_i X^{p^i}$. Following I.~Kaplansky, we call a polynomial $g$ a \bfind{$p$-polynomial}
if $g=f+c$ where $f$ is an additive polynomial and $c$ is a constant.

A valued field $(K,v)$ is called a \bfind{Kaplansky field} if $\chara Kv=0$ or if it satisfies
\bfind{Kaplansky's hypothesis A}: for $\chara Kv=p>0$,
\sn
{\bf (A1)} \ every $p$-polynomial with coefficients in $Kv$ has a zero in $Kv$,
\sn
{\bf (A2)} \ $vK$ is $p$-divisible.

\pars
Prie{\ss}-Crampe proves her main result in \cite{[P2]} by a direct application of this formulation of
hypothesis A. While this proof is not without interest, a more modern approach can lead to more insight,
as we will show in the present paper. A lot of important work has been done over time by several authors
in order to better understand Kaplansky's hypothesis A, e.g.\ by G.~Whaples in \cite{[Wh]}, by F.~Delon
in her thesis, and in the paper \cite{[KPR]}. The final touch to this development was given by Kaplansky
himself, in cooperation with D.~Leep (see \cite[Section 9]{[Ku1]}). We will use the following
characterization. By Theorem 1 of \cite{[Wh]} and the other cited sources,
hypothesis A is equivalent to the conjunction of the following three conditions,
where $p$ denotes the characteristic of the residue field:
\sn
{\bf (K1)} if $p>0$, then the value group is $p$-divisible,
\sn
{\bf (K2)} the residue field is perfect,
\sn
{\bf (K3)} the residue field admits no finite separable extension of degree divisible by $p$.

\pars
A valued field is \bfind{henselian} if it satisfies Hensel's Lemma, or equi\-valently, admits a
unique extension of its valuation to its algebraic closure. Note that every algebraic extension of a
henselian field is again henselian, with respect to the unique extension of the valuation. A
\bfind{henselization} of a valued field $(K,v)$ is an algebraic extension which is henselian and minimal
in the sense that it can be embedded over $K$ in every other henselian extension field of $(K,v)$.
Henselizations exist for every valued field $(K,v)$, in fact, every henselian extension field of $(K,v)$
contains a henselization of $(K,v)$. Henselizations are unique up to valuation preserving isomorphism over
$K$. Therefore, we will speak of ``the henselization of $(K,v)$'' and denote it by $(K^h,v)$.

An extension $(L|K,v)$ of valued fields is called \bfind{immediate} if the canonical embeddings of $vK$ in
$vL$ and of $Kv$ in $Lv$ are onto, in other words, value group and residue field remain unchanged. A valued
field is called \bfind{maximal} if it does not admit any nontrivial immediate extensions; by
\cite[Theorem 4]{[Ka]}, this
holds if and only if it is spherically complete. A valued field is called \bfind{algebraically
maximal} if it does not admit any nontrivial immediate algebraic extension. Since henselizations are
immediate algebraic extensions, every algebraically maximal field is henselian.

The notion of ``purely wild extension'' (an algebraic extension of a henselian field that is linearly
disjoint from all tame extensions) was introduced in \cite{[KPR]}. We will give the precise definitions
for both types of extensions in the Preliminaries Section.

\pars
The following is our main theorem:

\begin{theorem}                               \label{MT}
Take a valued field extension $(L|K,v)$, with $(L,v)$ an algebraically maximal Kaplansky field. Then $L$
contains a maximal immediate algebraic extension of $(K,v)$, as well as a maximal purely wild extension
of the henselization of $(K,v)$ inside of $(L,v)$.

If $(L,v)$ is maximal, then it also contains a maximal immediate extension of $(K,v)$.
\end{theorem}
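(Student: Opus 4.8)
The plan is to reduce the whole theorem to one embedding fact about $L$ and then apply Zorn's lemma three times.

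First I would set up and reduce. Since an algebraically maximal field is henselian, $L$ contains a henselization $(K^h,v)$ of $(K,v)$, and since $(K^h|K,v)$ is immediate and algebraic, a maximal immediate, resp.\ maximal immediate algebraic, extension of $(K^h,v)$ is one of $(K,v)$ as well; so I may assume $K=K^h$ is henselian. I may also assume $p:=\chara Lv>0$, the residue characteristic $0$ case being immediate (a henselian field of residue characteristic $0$ has no nontrivial purely wild or immediate algebraic extension, and the last assertion then follows as below using only spherical completeness of $L$). Finally I would record, from the standard characterization of tame fields together with (K1) and (K2), that $(L,v)$ is a \emph{tame field}: in particular $vL$ is $p$-divisible, $Lv$ is perfect, and — combining (K3) with perfectness — $Lv$ has no nontrivial finite extension of $p$-power degree.

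The embedding fact I would establish is: \emph{if $(K',v)$ is henselian with $K'\subseteq L$, then every finite immediate extension $(K'(a),v)$, $a\in\overline L$, is already contained in $L$.} For $K'$ henselian the degree $m=[K'(a):K']$ equals the defect of $K'(a)/K'$, hence is a power of $p$; therefore $[L(a):L]$, which divides $m$, is a power of $p$, and so are the ramification index $(vL(a):vL)$ and the residue degree $[L(a)v:Lv]$. Since $vL$ is $p$-divisible and value groups are torsion-free, a finite $p$-group between $vL$ and $vL(a)$ is trivial, so $vL(a)=vL$; and $L(a)v=Lv$ by the remark on $Lv$. Hence $L(a)/L$ is immediate, and algebraic maximality of $L$ forces $a\in L$.

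With this in hand I would obtain the three statements by Zorn's lemma; note that whichever object I extract below is maximal for its defining property, hence equals its own henselization inside $L$ and so is henselian. (i) Let $(M,v)$ be maximal among subextensions of $(L|K,v)$ that are immediate and algebraic over $K$. Were $M$ not algebraically maximal, it would have a proper finite immediate extension, which by the embedding fact lies in $L$ and, being immediate and algebraic over $K$, would properly extend $M$ in this poset — contradiction. So $(M,v)$ is a maximal immediate algebraic extension of $(K,v)$ inside $L$. (ii) Let $(W,v)$ be maximal among subextensions of $(L|K,v)$ that are purely wild over $K$. If $vW$ were not $p$-divisible, or $Wv$ not perfect, one gets over $W$ a purely wild extension of degree $p$, ramified resp.\ residually inseparable; but using $p$-divisibility of $vL$ and perfectness of $Lv$ the needed $p$-th root reduces to solving $X^p=1+\epsilon$ with $v\epsilon>0$ in $L$, which is possible since that extension is immediate and $L$ is algebraically maximal. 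So this extension lies in $L$, and being purely wild over $W$ and hence — by transitivity — over $K$, it properly extends $W$, a contradiction. Thus $vW$ is $p$-divisible and $Wv$ perfect, and the embedding fact (as in (i)) shows $W$ algebraically maximal, hence a tame field. Since a purely wild extension of a henselian field that is itself a tame field admits no proper purely wild extension, $(W,v)$ is a maximal purely wild extension of the henselization of $(K,v)$ inside $L$. (iii) Now assume $(L,v)$ maximal and let $(F,v)$ be maximal among immediate subextensions of $(L|K,v)$. A pseudo Cauchy sequence in $F$ with no pseudo-limit in $F$ gives, if of algebraic type, a finite immediate algebraic extension of $F$ lying in $L$ by the embedding fact, and if of transcendental type, a pseudo-limit $b\in L$ (spherical completeness) with $(F(b)|F,v)$ immediate; in either case $F$ is properly extended in the poset, a contradiction. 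So $(F,v)$ is maximal, i.e.\ a maximal immediate extension of $(K,v)$ inside $L$.

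The main obstacle — and the only place where Kaplansky's hypothesis really bites — is the embedding fact, i.e.\ that the Zorn-maximal objects are absolutely maximal. The classical uniqueness theory for immediate extensions needs hypothesis A for the \emph{base} field, whereas here the base $K'$ is only a subfield of the Kaplansky field $L$ and need not be Kaplansky itself, since neither $p$-divisibility of the value group nor (K3) for the residue field passes to subfields. What rescues the argument is that the degree of a finite immediate extension of a henselian field is forced to be a $p$-power, so after extending scalars to $L$ it is exactly the tame-field structure of $L$ — $p$-divisible value group (no new ramification), residue field without $p$-power extensions (no new residue) — together with algebraic maximality that makes the extension collapse into $L$. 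The remaining inputs (the characterization of tame fields, Ostrowski's defect formula, transitivity of pure wildness, and the fact that ``purely wild over a henselian field $+$ tame field'' is the same as ``maximal purely wild extension''; see \cite{[KPR]} for purely wild extensions) are ramification-theoretic facts that the Preliminaries section would supply.
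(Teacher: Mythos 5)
Your global strategy (Zorn-maximal subextensions of $L$, shown to be absolutely maximal, with the pseudo Cauchy dichotomy for the transcendental part) matches the paper's, but the step that carries all the weight --- your ``embedding fact'' --- is both false as stated and incorrectly proved. The proof breaks at ``$[L(a):L]$, which divides $m$'': degrees of field extensions need not divide under base change; one only has $[L(a):L]\le [K'(a):K']$, and an irreducible factor over $L$ of the minimal polynomial of $a$ over $K'$ need not have $p$-power degree. In fact, since an algebraically maximal Kaplansky field admits no finite extension of degree divisible by $p$ (Proposition~\ref{p1}), $[L(a):L]$ is always prime to $p$, so your argument, were it valid, would force $a\in L$ for \emph{every} finite immediate extension $K'(a)$ of \emph{every} henselian subfield $K'$ and every root $a$. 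That conclusion is too strong: when $K'(a)|K'$ is not normal, $L$ may contain a $K'$-conjugate $a'$ of $a$ without containing $a$ itself (the minimal polynomial then factors over $L$ into a linear factor and factors of degree prime to $p$, and nothing in your degree count rules this out). What your Zorn arguments actually need, and what is true, is only that \emph{some} copy of $K'(a)$ embeds into $L$ over $K'$; this cannot be obtained by counting degrees over $L$.

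Producing that embedding is precisely where Kaplansky's hypothesis must be used and where your proposal has no substitute: one needs the uniqueness of maximal purely wild (equivalently, for Kaplansky-type bases, maximal immediate algebraic) extensions, which by Proposition~3.2 of \cite{[KPR]} requires condition (K3) for the base field --- and, as you yourself note, $K'$ need not satisfy (K3). The paper resolves this by first reducing to the case where $L|K$ is algebraic (Proposition~\ref{p2}) and then passing to $Z_0=Z\cap L$ with $Z$ the absolute ramification field of $K'$: the extension $L|Z_0$ is purely wild, so $Z_0$ inherits (K3) from $L$, $L$ is a maximal purely wild extension of $Z_0$, hence $Z_0(a)$ embeds into $L$ over $Z_0$ by the uniqueness theorem, and henselianity of $K'$ turns this into a valuation-preserving embedding of $K'(a)$ into $L$ over $K'$. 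The remaining parts of your write-up (reduction to $K=K^h$, the treatment of the value group and residue field of $W$ in (ii), and part (iii)) are in order, but they all rest on the flawed embedding fact, so the proof as written has a genuine gap; repairing it essentially forces you back to the paper's ramification-theoretic argument.
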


Modulo the fact that a valued field is maximal if and only if it is spherically complete, the last
assertion of this theorem is proved in \cite[Theorem 3.5]{[P2]}. As already indicated, we present a
different proof, which will be based on the ramification theory of Kaplansky and tame fields.
A crucial tool in this proof is the following analogue of Lemma~3.7 of \cite{[Ku2]}, which deals with
the case of tame fields. Interestingly, in the case of algebraically maximal Kaplansky fields
we do not need the assumption of that lemma that the residue field extension $Lv|Kv$ be algebraic.

\begin{proposition}                         \label{p2}
Take an algebraically maximal Kaplansky field $(L,v)$ and let $K$ be a relatively algebraically closed
subfield of $L$. Then also $(K,v)$ is an algebraically maximal Kaplansky field, with its residue field
relatively algebraically closed in that of $L$.
\end{proposition}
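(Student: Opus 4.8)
The plan is to establish, in turn: that $(K,v)$ is henselian; that $Kv$ is relatively algebraically closed in $Lv$; that $(K,v)$ satisfies the Kaplansky conditions (K1)--(K3); and finally that $(K,v)$ is algebraically maximal. Throughout, $p$ denotes the (common) residue characteristic of $K$ and $L$, which agree since $Kv\subseteq Lv$.

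Henselianity of $(K,v)$ comes for free: $(L,v)$, being algebraically maximal, is henselian, hence contains a henselization of $(K,v)$; this henselization is algebraic over $K$ and lies inside $L$, so by relative algebraic closedness it equals $K$. Next I would isolate a fact to be used twice: an algebraically maximal Kaplansky field $(F,v)$ with $p>0$ admits no extension whose degree is a nontrivial power of $p$. Indeed, for a degree-$p^{k}$ extension the fundamental equality reads $efd=p^{k}$; the ramification index $e$ is $1$ because $vF$ is $p$-divisible by (K1) (otherwise $vF'/vF$ would contain a torsion element); the residue degree $f$ is $1$ because $Fv$ is perfect by (K2), so the residue extension is separable, and its degree divides $p^{k}$, which by (K3) forces it trivial; hence $d=p^{k}$, and if $k\ge1$ this contradicts algebraic maximality. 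Applying this to $(L,v)$: if some $a\in L^{\times}$ were not a $p$-th power, $X^{p}-a$ would be irreducible over $L$ and yield a degree-$p$ extension; so $L^{\times}$ is $p$-divisible. Hence for $a\in K^{\times}$ the element $a^{1/p}\in L$ is algebraic over $K$ and so lies in $K$; thus $K^{\times}$ is $p$-divisible, which gives (K1) for $(K,v)$ and, passing to residues, the perfectness of $Kv$, i.e.\ (K2). (All of this is vacuous when $p=0$.)

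For the residue field I would argue directly with Hensel's Lemma in $(L,v)$. Given $\bar a\in Lv$ algebraic over $Kv$, perfectness of $Kv$ makes $\bar a$ separable over $Kv$; lift its monic minimal polynomial over $Kv$ to a monic $\hat g\in\cO_K[X]$. Then $\bar a$ is a simple root of the reduction of $\hat g$, so by henselianity $\hat g$ has a root $a\in\cO_L$ with residue $\bar a$; since $a$ is algebraic over $K$ we get $a\in K$, whence $\bar a\in Kv$. So $Kv$ is relatively algebraically closed in $Lv$ — notably, no a priori assumption on the extension $Lv|Kv$ entered, in contrast with the tame analogue. Condition (K3) for $(K,v)$ now follows: a finite separable extension $E|Kv$ of degree divisible by $p$ would have Galois closure $\tilde E$ with $\tilde E\cap Lv=Kv$ by relative algebraic closedness, whence $\tilde E\cdot Lv|Lv$ is a finite separable extension of degree divisible by $p$, contradicting (K3) for $Lv$. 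At this point $(K,v)$ is a henselian Kaplansky field.

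The crux is algebraic maximality, and the device is that relative algebraic closedness keeps minimal polynomials irreducible over $L$. Suppose, for contradiction, $(K,v)$ had a proper immediate algebraic extension; then it has a proper finite one, whose degree $n$ equals its defect and hence is a power of $p$ — in particular $p>0$, since henselian fields of residue characteristic $0$ have no immediate algebraic extensions. Let $h\in K[X]$ be the minimal polynomial of a primitive element, so $\deg h=n=p^{k}$ with $k\ge1$. Any monic factor of $h$ over $L$ has coefficients among the elementary symmetric functions of some of the roots of $h$, hence algebraic over $K$, hence (being in $L$) in $K$; so $h$ remains irreducible over $L$, and for a root $\alpha$ of $h$ in an algebraic closure of $L$ we get $[L(\alpha):L]=n=p^{k}$. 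Applying the fact above to $(L,v)$ and this extension yields a contradiction with the algebraic maximality of $(L,v)$. Hence $(K,v)$ is algebraically maximal, and the proposition follows. I expect this last step to be the real obstacle: the tempting approach is to embed the entire immediate extension of $K$ into $L$, which in general is impossible, so one must transport only the minimal polynomial $h$ and let the Kaplansky conditions on $L$ — through the defect-rigidity of $L(\alpha)|L$ — close the argument.
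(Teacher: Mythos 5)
Your proof is correct, and it takes a genuinely different route from the paper's. The paper funnels everything through Proposition~\ref{p1} (algebraically maximal Kaplansky $\Leftrightarrow$ henselian with no finite extension of degree divisible by $p$), whose forward direction rests on Lemma~\ref{amKf=tame}, i.e.\ on the nontrivial theorem from \cite{[Ku2]} that algebraic maximality together with (K1) and (K2) implies defectlessness; it then transfers this to $K$ by the linear disjointness of every finite extension $K'|K$ from $L|K$ (using that $K$ is perfect and relatively algebraically closed), and reads off the Kaplansky conditions for $K$ from the backward direction of Proposition~\ref{p1}. You never need the full statement: you only exclude extensions of $L$ whose degree is a nontrivial power of $p$, and for those the exclusion is elementary — by the Lemma of Ostrowski together with (K1)--(K3) such an extension has trivial ramification index and residue degree, hence is immediate, contradicting algebraic maximality directly, with no appeal to the defectless theorem. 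From this you extract (K1) and (K2) for $K$ via the $p$-divisibility of $L^\times$ and relative algebraic closedness, prove (K3) at the residue level by a Galois-closure/compositum argument (the paper instead lifts residue extensions to $K$ inside the proof of Proposition~\ref{p1}), and obtain algebraic maximality by transporting only the minimal polynomial of a hypothetical finite immediate extension of $K$; your elementary-symmetric-function argument that it stays irreducible over $L$ is the same phenomenon the paper invokes as linear disjointness, but proved by hand and without needing separability. What the paper's route buys is the stronger Proposition~\ref{p1} itself, which is used elsewhere in the paper; what yours buys is self-containedness, the only non-elementary input being Ostrowski's lemma. One wording slip: $vF'/vF$ is always a torsion group, so the parenthetical justification of $e=1$ should say that a nontrivial element of this $p$-group, combined with the $p$-divisibility of $vF$, would produce a nontrivial torsion element in the ordered (hence torsion-free) group $vF'$.
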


\parm
These results are a nice complement to the theory of tame fields as developed in \cite{[Ku2]}. The proofs
of the above results will be given in Section~\ref{sectres}, along with some more facts about algebraically
maximal Kaplansky fields.

%
%
\section{Preliminaries}
We start with the following well known fact:

\begin{lemma}                                        \label{hrach}
If $(L,v)$ is henselian and $K$ a relatively algebraically closed subfield of $L$, then also $(K,v)$ is
henselian.
\end{lemma}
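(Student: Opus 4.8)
The plan is to deduce this from the characterization of a henselian field as one that coincides with its own henselization, together with the fact recalled in the Introduction that every henselian extension field of $(K,v)$ already contains a henselization of $(K,v)$.

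First I would note that, since $(L,v)$ is henselian and $K\subseteq L$, the valued field $(L,v)$ is a henselian extension field of $(K,v)$; hence it contains a henselization $(K^h,v)$ of $(K,v)$, realized now as a subfield of $L$. By definition $K^h$ is an algebraic extension of $K$. Since $K$ is relatively algebraically closed in $L$, no element of $L\setminus K$ is algebraic over $K$, so $K^h\subseteq K$; together with $K\subseteq K^h$ this forces $K^h=K$. Finally, a henselization is always henselian, so $(K,v)=(K^h,v)$ is henselian, as desired.

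There is essentially no obstacle here: the argument reduces to a single invocation of the existence statement for henselizations. The only point that deserves a word of justification is the implication ``$K=K^h\Rightarrow (K,v)$ henselian'', which is immediate because $K^h$ is henselian by construction. If one prefers to bypass henselizations altogether, the same conclusion follows directly from Hensel's Lemma: given a polynomial over the valuation ring of $(K,v)$ together with an approximate root in $K$ satisfying the hypotheses of Hensel's Lemma, apply Hensel's Lemma in the henselian field $(L,v)$ to obtain an exact root $b\in L$; since $b$ is algebraic over $K$, relative algebraic closedness forces $b\in K$, so Hensel's Lemma already holds over $K$.
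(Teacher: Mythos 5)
Your argument is correct, and your primary route differs from the one in the paper: you invoke the existence statement for henselizations (every henselian extension field of $(K,v)$ contains a henselization $(K^h,v)$), observe that $K^h|K$ is algebraic, and conclude $K^h=K$ from relative algebraic closedness of $K$ in $L$. The paper instead argues directly with Hensel's Lemma: a polynomial over the valuation ring of $(K,v)$ satisfying the hypotheses has a root $a\in L$ by henselianity of $(L,v)$, and $a\in K$ since $a$ is algebraic over $K$ --- which is precisely the ``bypass'' you sketch at the end of your proposal, so your fallback coincides with the paper's proof. The trade-off is mild: your henselization argument is slicker and needs only the general existence and minimality properties of henselizations (which the paper has already recalled in the Introduction), while the paper's argument is more elementary and self-contained, relying only on the formulation of Hensel's Lemma itself; both are complete, and the only point worth keeping explicit in your version is that the embedding of $K^h$ into $L$ is valuation-preserving over $K$, which is part of the quoted existence statement.
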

\begin{proof}
Assume that $f$ is a polynomial with coefficients in the valuation ring of $(K,v)$ which satisfies the
conditions of Hensel's Lemma. Since the valuation ring of $(K,v)$ is contained in that of $(L,v)$ and
$(L,v)$ is henselian, $f$ admits a root $a$ in $L$ which satisfies the assertions of Hensel's Lemma. Being
a root of $f$, $a$ is algebraic over $K$ and since $K$ is relatively algebraically closed in $L$, we have
that $a\in K$.
\end{proof}

Take a finite extension $(L|K,v)$ of valued fields. The \bfind{Lemma of Ostrowski} says that whenever
the extension of $v$ from $K$ to $L$ is unique, then
\begin{equation}                            
[L:K]\;=\; p^\nu \cdot (vL:vK)\cdot [Lv:Kv] \;\;\;\mbox{ with }
\nu\geq 0\>,
\end{equation}
where $p$ is the \textbf{characteristic exponent} of $Lv$, that is,
$p=\chara Lv$ if this is positive, and $p=1$ otherwise. For the proof,
see \cite[Theoreme 2, p.~236]{[R]}) or \cite[Corollary to Theorem~25, p.~78]{[ZS]}).
If $p^\nu=1$, then we say that the extension $(L|K,v)$ is \bfind{defectless}.
Note that $(L|K,v)$ is always defectless if $\chara Kv=0$. We call a henselian field a \bfind{defectless
field} if all of its finite extensions are defectless. Each valued field of residue characteristic $0$ is a
defectless field.

An algebraic extension $(L|K,v)$ of henselian fields is called \bfind{tame} if every finite subextension
$E|K$ of $L|K$ satisfies the following conditions:
\sn
{\bf (TE1)} if $\chara Kv=p>0$, then the ramification index $(vE:vK)$ is prime to $p$,\sn
{\bf (TE2)} the residue field extension $Ev|Kv$ is separable,\sn
{\bf (TE3)} the extension $(E|K,v)$ is defectless.

\pars
A \bfind{tame valued field} (in short, \bfind{tame field}) is a
henselian field for which all algebraic extensions are tame. From the definition of a tame extension it
follows that $(K,v)$ is a tame field if and only if it is a defectless field satisfying conditions (K1) and
(K2). It is an easy observation that every extension of a defectless field $(K,v)$ satisfying conditions (K1)
and (K2) must be tame and so $(K,v)$ must be a tame field. The converse can be derived from the 
arguments in the second part of the proof of Proposition~\ref{p1} below.

By (K1) and (K2), the perfect hull of a tame field is an immediate extension, and by (TF3), this extension
must be trivial. This shows that every tame field is perfect.

\pars
If $\chara Kv=0$, then conditions (TE1) and (TE2) are void, and
every finite extension of $(K,v)$ is defectless. Hence very algebraic extension of a henselian field of
residue characteristic 0 is a tame extension, and every henselian field of residue characteristic 0
is a tame field.

\begin{lemma}                           \label{amKf=tame}
Every algebraically maximal valued field satisfying conditions (K1) and (K2), and in particular every
algebraically maximal Kaplansky field, is a tame field and hence a perfect and defectless field.
\end{lemma}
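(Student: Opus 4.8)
The plan is to reduce the assertion to the characterization ``$(K,v)$ is a tame field if and only if it is a defectless field satisfying (K1) and (K2)'', which was recorded in the Preliminaries. Since (K1) and (K2) are assumed, the only thing to prove is that an algebraically maximal field satisfying (K1) and (K2) is defectless; the final clause (perfect and defectless) then follows, because every tame field is perfect as already noted. So I would concentrate entirely on the defect.

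First I would recall that algebraically maximal fields are henselian (henselizations are immediate algebraic extensions, hence trivial for an algebraically maximal field), so it makes sense to speak of finite extensions with a unique prolongation of $v$ and to invoke the Lemma of Ostrowski. Take an arbitrary finite extension $(E|K,v)$; by henselianity the valuation extends uniquely, and Ostrowski gives $[E:K]=p^\nu\cdot(vE:vK)\cdot[Ev:Kv]$ with $p$ the characteristic exponent of $Kv$. I need $\nu=0$. The strategy is the standard one: pass to a suitable subextension that is immediate, and show it is trivial by algebraic maximality. Concretely, I would use that the ramification-theoretic decomposition of $E|K$ produces an intermediate field $E_0$ with $vE_0=vE$ and $E_0v=Ev$ but $[E:E_0]=p^\nu$; equivalently, one builds $E_0$ by successively adjoining elements realizing the value-group and residue-field extensions. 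Here (K1) is exactly what lets me realize all of $vE/vK$ by elements whose value-group contribution is accounted for (the $p$-part of $(vE:vK)$ causes no defect because $vK$ is $p$-divisible, so that factor is forced to be trivial anyway), and (K2) lets me realize $Ev/Kv$ by separable — hence, by henselianity, liftable — residue generators. The outcome is that $E|E_0$ is immediate of degree $p^\nu$.

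The main obstacle is making this last reduction clean: one must be careful that the intermediate field realizing the value and residue extensions can be chosen so that the remaining piece $E|E_0$ is genuinely immediate, and this is where one leans on the structure of henselian fields and conditions (K1), (K2) — in particular on $p$-divisibility of $vK$ to kill any $p$-torsion in $vE/vK$ and on perfectness/separability to lift residues. Once $E|E_0$ is immediate and algebraic, algebraic maximality of $(K,v)$ forces $E_0=K$ — more carefully, algebraic maximality of $(K,v)$ passes to the algebraic extension $E_0$ (an algebraically maximal henselian field has no proper immediate algebraic extension, and $E_0$, being finite over the algebraically maximal $K$, inherits this), so the immediate extension $E|E_0$ is trivial, i.e.\ $[E:E_0]=1$ and $\nu=0$. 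Hence every finite extension of $(K,v)$ is defectless, so $(K,v)$ is a defectless field, and together with (K1), (K2) this makes it a tame field by the characterization quoted above; being tame it is perfect, completing the proof. Finally, the ``in particular'' clause is immediate: every Kaplansky field satisfies (K1) and (K2) (indeed the stronger (K1)–(K3)), so an algebraically maximal Kaplansky field is covered by the general statement. $\qed$
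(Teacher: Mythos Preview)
Your outline is reasonable up to the decisive step, but there is a genuine gap at the point where you write ``algebraic maximality of $(K,v)$ passes to the algebraic extension $E_0$ (\ldots $E_0$, being finite over the algebraically maximal $K$, inherits this)''. This inheritance is \emph{not} a general fact: there exist henselian algebraically maximal fields that are not defectless, and for such a field one can find a finite extension that admits a proper immediate algebraic extension. In other words, ``algebraically maximal'' is not automatically preserved under finite extensions, and asserting it here is precisely assuming what has to be proved. Your first formulation, ``algebraic maximality of $(K,v)$ forces $E_0=K$'', is plainly wrong since $E_0|K$ need not be immediate; your corrected formulation replaces it with an unjustified claim.

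What the argument actually needs is the nontrivial implication ``algebraically maximal $+$ (K1) $+$ (K2) $\Rightarrow$ defectless'', and that is exactly the content of \cite[Theorem~3.2]{[Ku2]}, which is why the paper simply cites it. The proof there does not proceed by passing algebraic maximality up to $E_0$; rather, it uses ramification theory to show that under (K1) and (K2) every purely wild extension of $(K,v)$ is immediate, so algebraic maximality forces $K$ to equal its absolute ramification field intersected with a suitable closure, from which defectlessness follows. If you want to make your approach work, you would have to supply an argument that the specific tame extension $E_0|K$ you construct (defectless, ramification index prime to $p$, separable residue extension) preserves algebraic maximality; this can be done, but it requires the same ramification-theoretic input (e.g.\ that $E_0$ lies in the absolute ramification field of $K$, which is then also the absolute ramification field of $E_0$) and is not the one-line remark you make it. As written, the proof is incomplete at its central step.
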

\begin{proof}
This follows from \cite[Theorem~3.2]{[Ku2]} and the facts about tame fields we have mentioned above.
\end{proof}

The converse of this lemma does not hold since for a tame field it is admissible that its residue
field has finite separable extensions with degree divisible by~$p$.

\parm
Take a valued field $(K,v)$, fix an extension of $v$ to $K\sep$ and call it again $v$. The fixed field of
the closed subgroup
\[
G^r:=\{\sigma \in \Gal(K\sep|K) \mid v(\sigma a-a)>va \textrm{ for all }
 a\in \mathcal{O}_{K\sep}\setminus\{0\}\}
\]
of Gal$(K\sep|K)$ (cf.~\cite[Corollary (20.6)]{[E]}) is called the \bfind{absolute ramification field of
$(K,v)$}.
For the following fact, see \cite[(20.15 b)]{[E]}.

\begin{lemma}                         \label{Z}
Take a henselian field $(K,v)$ and denote by $Z$ the absolute ramification field of $(K,v)$. If $(K',v)$ is
an extension of $(K,v)$ inside of $Z$, then $Z$ is also the absolute ramification field of $(K',v)$.
\end{lemma}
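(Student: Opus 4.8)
\emph{The plan} is to show that the subgroup of the absolute Galois group that cuts out the absolute ramification field is literally unchanged when the base field is enlarged from $K$ to $K'$, as long as $K'$ stays inside $Z$. Write $G^r$ for the group displayed just before the statement, so that $Z$ is by definition the fixed field of $G^r$ in $K\sep$, and let $H$ denote the analogue for $K'$, namely
\[
H \>:=\> \{\sigma \in \Gal(K\sep|K') \mid v(\sigma a-a)>va \textrm{ for all } a\in \mathcal{O}_{K\sep}\setminus\{0\}\}.
\]
Here the chosen extension of $v$ to $K\sep$ is used throughout, and $K\sep$ serves simultaneously as a separable closure of $K'$, since $K'$ is separable algebraic over $K$; by definition the fixed field of $H$ is the absolute ramification field of $(K',v)$. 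Thus it suffices to prove $H=G^r$.

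First I would record that the defining condition $v(\sigma a-a)>va$ for all $a\in\mathcal{O}_{K\sep}\setminus\{0\}$ is intrinsic: it refers only to the action of $\sigma$ on $K\sep$ and makes no reference to the base field. Since $K\subseteq K'$ gives the inclusion $\Gal(K\sep|K')\subseteq \Gal(K\sep|K)$, an automorphism $\sigma$ lies in $H$ precisely when it fixes $K'$ and satisfies the ramification condition, so that
\[
H \>=\> G^r\cap \Gal(K\sep|K').
\]

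Next I would bring in the hypothesis $K'\subseteq Z$. Because $G^r$ is a \emph{closed} subgroup of $\Gal(K\sep|K)$ and $Z$ is its fixed field, the fundamental theorem of infinite Galois theory gives $\Gal(K\sep|Z)=G^r$ (and not merely an inclusion). From $K'\subseteq Z$ we obtain $\Gal(K\sep|Z)\subseteq \Gal(K\sep|K')$, hence $G^r\subseteq \Gal(K\sep|K')$. Substituting this into the displayed identity yields $H=G^r\cap\Gal(K\sep|K')=G^r$. Consequently the absolute ramification field of $(K',v)$, being the fixed field of $H=G^r$, coincides with $Z$, as desired.

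I expect the only delicate point to be the Galois-theoretic bookkeeping rather than any valuation-theoretic computation: I must confirm that $H$ and $G^r$ are compared as subgroups of one and the same automorphism group of $K\sep$, that $K\sep$ is indeed a separable closure of $K'$, and, most importantly, that the closedness of $G^r$ is invoked correctly so that $\Gal(K\sep|Z)=G^r$. Once these standard facts are secured, the equality $H=G^r$ is immediate from the intrinsic character of the ramification condition, and nothing further is needed.
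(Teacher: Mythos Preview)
Your argument is correct. The key observation---that the ramification condition $v(\sigma a-a)>va$ for all $a\in\mathcal{O}_{K\sep}\setminus\{0\}$ is intrinsic to $K\sep$ and independent of the base field---immediately gives $H=G^r\cap\Gal(K\sep|K')$, and then the closedness of $G^r$ together with $K'\subseteq Z$ forces $G^r\subseteq\Gal(K\sep|K')$, so $H=G^r$. The points you flag as delicate (that $K\sep$ serves as a separable closure of $K'$ because $K'\subseteq Z\subseteq K\sep$, and that closedness is needed to recover $G^r$ from its fixed field) are exactly the ones that matter, and you handle them correctly.

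As for comparison: the paper does not actually prove this lemma but simply refers the reader to Endler~\cite[(20.15 b)]{[E]}. Your proof is therefore not an alternative route so much as a self-contained unpacking of the Galois-theoretic content behind that citation. It has the virtue of making transparent that nothing beyond the definition of $G^r$, its closedness, and basic infinite Galois theory is required.
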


The next result follows from \cite[Theorem (22.7)]{[E]} (see also \cite[Proposition~4.1]{[KPR]}).

\begin{proposition}                         \label{Zmaxte}
The absolute ramification field of $(K,v)$ is the maximal tame extension of $(K,v)$.
\end{proposition}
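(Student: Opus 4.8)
The plan is to work inside a fixed extension of $v$ to $K\sep$, using the classical ramification subgroups (see \cite[\S20]{[E]}), and assuming — as is implicit, and as in Lemma~\ref{Z} — that $(K,v)$ is henselian (the general case reduces to $(K^h,v)$). Write $G:=\Gal(K\sep|K)$ and let $G^i:=\{\sigma\in G\mid v(\sigma a-a)>0\text{ for all }a\in\cO_{K\sep}\setminus\{0\}\}$ be the inertia group, with $G^r$ and its fixed field $Z$ as in the text. First I would observe that $G^r\trianglelefteq G$: since $(K,v)$ is henselian, every $\tau\in G$ preserves $v$, so for $\sigma\in G^r$ and $a\in\cO_{K\sep}\setminus\{0\}$ one computes $v(\tau\sigma\tau^{-1}a-a)=v(\sigma(\tau^{-1}a)-\tau^{-1}a)>v(\tau^{-1}a)=va$; hence $Z|K$ is Galois. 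I would also invoke two standard facts from ramification theory: for every finite Galois subextension $E|K$ of $K\sep|K$, the image $G^r(E|K)$ of $G^r$ in $\Gal(E|K)$ is precisely the (unique, normal) $p$-Sylow subgroup of the inertia group $G^i(E|K)$, and $|G^i(E|K)|=[E:K]/[(Ev)_{\mathrm{sep}}:Kv]$, where $(Ev)_{\mathrm{sep}}$ is the separable closure of $Kv$ in $Ev$ and $p$ the characteristic exponent of $Kv$.

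The core is then the following chain of equivalences for a finite Galois extension $E|K$: $E\subseteq Z\iff G^r(E|K)=\{1\}\iff |G^i(E|K)|\text{ is prime to }p\iff E|K\text{ is tame}$. The first equivalence is just that $Z$ is the fixed field of $G^r$, and the second holds because $G^r(E|K)$ is the $p$-Sylow of $G^i(E|K)$. For the third, I would apply the Lemma of Ostrowski, writing $[E:K]=p^{\nu}\cdot(vE:vK)\cdot[Ev:Kv]$ and $[Ev:Kv]=[(Ev)_{\mathrm{sep}}:Kv]\cdot q$ with $q$ a power of $p$; then $|G^i(E|K)|=p^{\nu}\cdot(vE:vK)\cdot q$, and this is prime to $p$ exactly when $\nu=0$, $q=1$ and $(vE:vK)$ is prime to $p$ — that is, exactly when $(E|K,v)$ is defectless (TE3), $Ev|Kv$ is separable (TE2) and $(vE:vK)$ is prime to $p$ (TE1), i.e.\ when $E|K$ is tame.

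To finish, I would remove the Galois assumption and pass to infinite extensions. If $E|K$ is any finite subextension of $Z|K$, then, $Z|K$ being Galois, the Galois closure $\widetilde E$ of $E|K$ lies in $Z$, hence is tame by the equivalence; a subextension of a tame extension is tame, so $E|K$ is tame. Thus $Z|K$ is a tame extension. Conversely, if $E|K$ is a finite tame extension, its Galois closure $\widetilde E|K$ is again tame (conjugates and composita of tame extensions are tame), so $\widetilde E\subseteq Z$ by the equivalence, whence $E\subseteq Z$. Since an arbitrary tame extension is the union of its finite subextensions, it too lies in $Z$. Therefore $Z|K$ is tame and contains every tame extension of $(K,v)$, i.e.\ it is the maximal tame extension.

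The main obstacle is not a single calculation but correctly marshalling the inputs from ramification theory — above all the identification of $G^r(E|K)$ with the $p$-Sylow of the inertia group and the formula for $|G^i(E|K)|$ — and then the bookkeeping with Ostrowski's formula that makes the three numerical conditions ``$\nu=0$'', ``$q=1$'', ``$(vE:vK)$ prime to $p$'' match up with (TE3), (TE2), (TE1) exactly; after that, the passage to Galois closures and to unions is immediate.
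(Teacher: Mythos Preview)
Your argument is correct. The paper does not actually prove this proposition but merely cites \cite[Theorem~(22.7)]{[E]} and \cite[Proposition~4.1]{[KPR]}; your proof unpacks precisely the ramification-theoretic content behind those references (the $p$-Sylow description of $G^r$ inside $G^i$, the inertia-degree formula $|G^i(E|K)|=[E:K]/[(Ev)_{\mathrm{sep}}:Kv]$, and the Ostrowski bookkeeping matching (TE1)--(TE3)), so there is no divergence in approach, only in level of detail.
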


An algebraic extension of a henselian field is called \bfind{purely wild} if it is linearly disjoint to the
absolute ramification field and thus to every tame extension. In \cite{[KPR]} it was shown that maximal
purely wild extensions are field complements to the absolute ramification field. In the case of
Kaplansky fields, they are at the same time maximal immediate algebraic extensions, and they are unique up
to isomorphism. See the cited paper for details.

\pars
Finally, we will need the following results about the absolute ramification field.

\begin{lemma}                         \label{Zcap}
Take an algebraic extension $(L|K,v)$ of henselian fields. Denote by $Z$ the absolute ramification field
of $(K,v)$. Then $(Z\cap L,v)$ is a tame extension of $(K,v)$ maximal with respect to being
contained in $L$, and $(L,v)$ is a purely wild extension of $(Z\cap L,v)$.
\end{lemma}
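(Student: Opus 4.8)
The plan is to identify $Z\cap L$ as the largest tame extension of $(K,v)$ lying inside $L$ by invoking Proposition~\ref{Zmaxte}, and then to derive pure wildness of $(L,v)$ over it from Lemma~\ref{Z} together with the fact that the absolute ramification field is a normal extension of the base field.

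First I would observe that $(Z\cap L)|K$ is a subextension of $Z|K$, so, since $Z$ is the maximal tame extension of $(K,v)$ by Proposition~\ref{Zmaxte}, every finite subextension of $(Z\cap L)|K$ is also a finite subextension of $Z|K$ and hence tame; this shows that $(Z\cap L\,|\,K,v)$ is a tame extension contained in $L$. For maximality, I would take any tame extension $(M,v)$ of $(K,v)$ with $K\subseteq M\subseteq L$; since every tame extension of $(K,v)$ is contained in $Z$ by Proposition~\ref{Zmaxte}, we get $M\subseteq Z$, whence $M\subseteq Z\cap L$. Thus $(Z\cap L,v)$ is the tame extension of $(K,v)$ maximal with respect to being contained in $L$.

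For the second assertion, put $M:=Z\cap L$; this is a henselian field, being the intersection of the henselian fields $Z$ and $L$, and by Lemma~\ref{Z} the field $Z$ is also the absolute ramification field of $(M,v)$. By definition, $(L,v)$ is purely wild over $(M,v)$ precisely when $L$ and $Z$ are linearly disjoint over $M$, inside an algebraic closure of $K$ equipped with the unique extension of $v$. Now $L\cap Z=M$ by construction, and the absolute ramification field $Z$ is Galois over $K$ — the ramification subgroup $G^r$ is normal in $\Gal(K\sep|K)$, see~\cite{[E]} — so $Z|M$ is Galois as well. Since a Galois extension is linearly disjoint from any algebraic extension over their intersection, $L$ and $Z$ are linearly disjoint over $M=L\cap Z$, and therefore $(L,v)$ is a purely wild extension of $(Z\cap L,v)$.

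The step I expect to be the genuine obstacle is the last one: ``purely wild'' is defined by \emph{linear disjointness} from the absolute ramification field, which is strictly stronger than merely intersecting it trivially, and it is the normality of $Z$ over $K$ — equivalently over $Z\cap L$, by Lemma~\ref{Z} — that bridges this gap. The remaining ingredients (a subextension of a tame extension is tame, and the intersection of two henselian subfields of a common valued field is again henselian) are routine, and the structure theory of the absolute ramification field that is needed has already been recorded in the Preliminaries.
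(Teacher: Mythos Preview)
Your argument is correct and follows essentially the same route as the paper: Proposition~\ref{Zmaxte} for tameness, and the normality of $Z|K$ (together with Lemma~\ref{Z}) to upgrade $L\cap Z=Z\cap L$ to linear disjointness for the purely wild claim. Your maximality step is in fact slightly more direct than the paper's, which detours through Lemma~\ref{Z} there as well; and note that $Z\cap L$ being henselian follows more simply from its being an algebraic extension of the henselian field $(K,v)$.
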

\begin{proof}
By Proposition~\ref{Zmaxte}, $(Z\cap L,v)$ is a tame extension of $(K,v)$.
Assume that $(E,v)$ is a tame extension of $(K,v)$ inside of $(L,v)$ which is an
extension of $(Z\cap L,v)$. Then $(E,v)$ is also a tame extension of $(Z\cap L,v)$.
By Lemma~\ref{Z}, $Z$ is also the absolute ramification field of $(Z\cap L,v)$. Thus by
Proposition~\ref{Zmaxte}, $Z$ is the maximal tame extension of $(Z\cap L,v)$. Hence $E\subseteq Z$ and
therefore, $E=Z\cap L$, which proves that $(Z\cap L,v)$ is a tame extension of $(K,v)$ maximal
with respect to being contained in $L$.

Now we wish to prove that $(L|Z\cap L,v)$ is a purely wild extension. Since $Z$ is the absolute
ramification field of $(Z\cap L,v)$, it suffices to show that $L|Z\cap L$ is linearly disjoint from $Z|Z
\cap L$. But this follows from the fact that according to \cite[Theorem~5.3.3 (2)]{[EP]}, $Z|K$ is a
Galois extension.
\end{proof}

%
%
\section{Results on algebraically maximal Kaplansky fields}             \label{sectres}
%
%
We start with the following characterization of algebraically maximal Kaplansky fields:

\begin{proposition}    \label{p1}
A valued field $(K,v)$ is an algebraically maximal Kaplansky field if and only
if it is henselian and does not admit any finite extension of degree divisible by $p$.
\end{proposition}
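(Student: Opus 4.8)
The statement to prove is Proposition~\ref{p1}: a valued field $(K,v)$ is an algebraically maximal Kaplansky field if and only if it is henselian and admits no finite extension of degree divisible by $p$ (where $p=\chara Kv$, or $p=1$ in residue characteristic $0$, in which case the condition is vacuous and the proposition reduces to a known fact). I would organize the proof as two implications.

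For the forward direction, assume $(K,v)$ is an algebraically maximal Kaplansky field. By Lemma~\ref{amKf=tame} it is a tame field, hence henselian, perfect and defectless. Now suppose for contradiction that $(K,v)$ has a finite extension $(E|K,v)$ with $p\mid [E:K]$. Since $K$ is henselian, the extension of $v$ to $E$ is unique, so the Lemma of Ostrowski applies: $[E:K]=p^\nu(vE:vK)[Ev:Kv]$. Because $(K,v)$ is defectless, $\nu=0$, so $p\mid (vE:vK)$ or $p\mid [Ev:Kv]$. The first is impossible by condition (K1), which forces $vK$ to be $p$-divisible so that $(vE:vK)$ is prime to $p$ for any finite extension. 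So $p\mid [Ev:Kv]$. By (K2) the residue field $Kv$ is perfect, and by (K3) it admits no finite separable extension of degree divisible by $p$; since $Kv$ is perfect, $Ev|Kv$ is separable, contradicting $p\mid[Ev:Kv]$. Hence no such $E$ exists.

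For the converse, assume $(K,v)$ is henselian with no finite extension of degree divisible by $p$. I first need to check the three Kaplansky conditions (K1), (K2), (K3), which by the characterization cited in the introduction is equivalent to hypothesis A, hence $(K,v)$ is a Kaplansky field. Each is obtained by exhibiting a suitable finite extension and computing its degree via Ostrowski. If $vK$ were not $p$-divisible, pick $\gamma\in vK$ with $\gamma\notin p\cdot vK$; adjoining a $p$-th root of an element of value $\gamma$ produces a finite extension whose ramification index is divisible by $p$, and Ostrowski makes the field degree divisible by $p$ — contradiction, giving (K1). Similarly, if $Kv$ had a finite separable extension of degree divisible by $p$, then (by henselianity, lifting this residue extension) one gets a finite extension of $K$ whose residue degree is divisible by $p$; Ostrowski again forces $p\mid[E:K]$ — contradiction, giving (K3). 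For (K2): if $Kv$ were imperfect, its inseparable closure contributes finite subextensions; lifting and using that $\chara K$ is either $0$ or $p$, one again manufactures a finite extension of $K$ whose residue field degree is a power of $p$, contradicting the hypothesis (this also shows $K$ is perfect). So $(K,v)$ is a Kaplansky field. It remains to show it is algebraically maximal. Since it is henselian and has no finite extension of degree divisible by $p$, in particular every finite extension is defectless (the defect is a power of $p$, which would divide the degree), so $(K,v)$ is a defectless field; combined with (K1) and (K2) this makes it a tame field. A tame Kaplansky field is algebraically maximal: any nontrivial immediate algebraic extension would have to lie inside a finite immediate subextension, but an immediate finite extension of a henselian field has degree equal to its defect $p^\nu$, and any $p^\nu>1$ contradicts the hypothesis — so there are no nontrivial immediate algebraic extensions.

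The main obstacle is the converse direction, specifically extracting (K2) (perfectness of $Kv$, equivalently of $K$) cleanly from the degree hypothesis: one must be careful to translate an imperfection of the residue field into an honest finite extension of $K$ of $p$-power degree, which requires a Hensel-type lifting argument and attention to the relation between $\chara K$ and $\chara Kv$ (the equal- versus mixed-characteristic cases). Once the right finite extensions are in hand, every step is a routine application of Ostrowski's lemma together with the definitions of "defectless" and "immediate."
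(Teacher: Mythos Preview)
Your proposal is correct and follows essentially the same route as the paper: Lemma~\ref{amKf=tame} plus Ostrowski for the forward direction, and Ostrowski plus lifting arguments for the converse. One small simplification the paper makes that you might adopt: instead of treating (K2) and (K3) separately and worrying about the inseparable case, the paper observes in one stroke that \emph{any} finite extension of $Kv$ of degree divisible by $p$ lifts to a finite extension of $K$ of the same degree (for henselian $K$), immediately yielding both (K2) and (K3); your flagged ``main obstacle'' regarding (K2) therefore dissolves, since a purely inseparable degree-$p$ extension $Kv(\bar a^{1/p})$ lifts via $X^p-a$ for any preimage $a$ of $\bar a$ (irreducible because $a\notin K^p$, as a $p$-th root would reduce to one of $\bar a$).
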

\begin{proof}
If $\chara Kv=0$, then the assertion is trivial since in this case, $(K,v)$ is an algebraically maximal
Kaplansky field if and only if it is henselian, and $0$ does not divide the degree of any finite extension.

Now we consider the case of $\chara Kv=p>0$. Assume first that $(K,v)$ is an algebraically maximal Kaplansky
field. Then in particular, it is henselian. By Lemma~\ref{amKf=tame}, it is a defectless field, that is,
every finite extension $(K'|K,v)$ satisfies
\[
[K':K]\>=\> (vK':vK)\cdot [K'v:Kv]\>.
\]
Since $(K,v)$ satisfies (K1), $(vK':vK)$ is not divisible by $p$. Because of (K2) and (K3), the same holds
for $[K'v:Kv]$. Hence also $[K':K]$ is not divisible by $p$.

Now assume that $(K,v)$ is henselian and does not admit any finite extension of degree divisible by $p$. By the Lemma of Ostrowski, every finite extension $(K'|K,v)$ satisfies
\[
[K':K]\>=\> p^{\nu}\cdot (vK':vK)\cdot [K'v:Kv]
\]
with $\nu\geq 0$. Since $[K':K]$ is not divisible by $p$, it follows that $\nu=0$. This implies that
$(K,v)$ is a defectless field and thus also an algebraically maximal field. Further, if there were an
element $\alpha\in vK$ not divisible by $p$, then adjoining to $K$ the $p$-th root of any element of $K$
having value $\alpha$ would generate an extension of $K$ of degree $p$. This shows that $vK$ must be
$p$-divisible, i.e., $(K,v)$ satisfies (K1). Finally, if there were a finite extension of $Kv$ with a
degree divisible by $p$, then it could be lifted to an extension of $K$ of the same degree, which should
not exist. This shows that $Kv$ does not admit any finite extension of degree divisible by $p$, so $(K,v)$
satisfies (K2) and (K3).
\end{proof}

\sn
{\it Proof of Proposition \ref{p2}.}
The first assertion is trivial if $\chara Kv=0$ since a relatively algebraically closed subfield of a
henselian field is again henselian. Suppose that $Kv$ is not relatively algebraically closed in $Lv$.
Then there exists some $\bar a\in Lv\setminus Kv$ separable-algebraic over $Kv$. Lift
its minimal polynomial $\bar f$ over $Kv$ up to a polynomial $f$ over $K$ of the same degree. Since
$\bar a$ is a simple root of $\bar f$, Hensel's Lemma yields the existence of a root $a\in L$ of $f$
with $av=\bar a$. Then $a$ is algebraic over, but not in $K$, which contradicts our assumption
that $K$ is relatively algebraically closed in $L$. Hence $Kv$ is relatively algebraically closed in $Lv$.

Let us now assume that $\chara Kv=p>0$. Then we can use the assertion of Proposition~\ref{p1}. Since $L$ is
perfect by Lemma~\ref{amKf=tame}, the same holds for its relatively algebraically closed subfield $K$.
Since the algebraically maximal field $(L,v)$ is henselian, the same holds for $(K,v)$ by Lemma~\ref{hrach}.

Take any finite extension $K'|K$. Since $K$ is perfect and relatively algebraically closed in $L$,
$K'|K$ is linearly disjoint from $L|K$. Therefore, $[K':K]=[L.K':L]$, which is not divisible by $p$. In view
of Proposition~\ref{p1}, this proves that $(K,v)$ is an algebraically maximal Kaplansky field. In particular,
$Kv$ is perfect. As in the case of $\chara Kv=0$ it is now shown that $Kv$ is relatively algebraically closed
in $Lv$.
\qed

\parm
%

\begin{proposition}              \label{pZ}
Take an algebraically maximal Kaplansky field $(L,v)$ and a henselian subfield $K$ such that $L|K$ is
algebraic. Denote the absolute ramification field of $(K,v)$ by $Z$ and set $Z_0:=Z\cap L$. Then the
following assertions hold:
\sn
a) \ $(Z_0,v)$ is a tame extension of $(K,v)$ maximal with respect to being contained in $L$,
\n
b) \ $(L,v)$ is a maximal purely wild extension of $(Z_0,v)$,
\n
c) \ {\rm (K3)} also holds for $(Z_0,v)$.
\end{proposition}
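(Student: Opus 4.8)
The plan is to prove the three assertions of Proposition~\ref{pZ} essentially by reducing (a) and (b) to Lemma~\ref{Zcap}, which was tailored for exactly this situation, and then deriving (c) from Proposition~\ref{p1} together with the structure of the tower $K \subseteq Z_0 \subseteq L$.

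For part (a): since $(K,v)$ is henselian and $L|K$ is algebraic, $(L|K,v)$ is an algebraic extension of henselian fields (recall that algebraic extensions of henselian fields are henselian), so Lemma~\ref{Zcap} applies verbatim with this $K$ and $L$: $(Z\cap L,v)=(Z_0,v)$ is a tame extension of $(K,v)$ maximal with respect to being contained in $L$. For part (b): Lemma~\ref{Zcap} also gives that $(L,v)$ is a purely wild extension of $(Z_0,v)$; it remains to upgrade ``purely wild'' to ``maximal purely wild''. Here I would invoke Lemma~\ref{amKf=tame}: $(L,v)$ is a tame, hence perfect and defectless, field, and in fact an algebraically maximal Kaplansky field. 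A maximal purely wild extension of $(Z_0,v)$ is a field complement to the absolute ramification field of $(Z_0,v)$, which by Lemma~\ref{Z} is again $Z$; so a purely wild extension $M|Z_0$ inside $L$ is maximal purely wild precisely when $M\cdot Z = Z\sep$-completes $Z_0\sep$ in the appropriate sense, i.e.\ when $M$ admits no nontrivial purely wild extension. Since $L$ is a perfect, defectless, algebraically maximal Kaplansky field, it admits no nontrivial purely wild extension at all (by Proposition~\ref{p1}, any such would have degree divisible by $p$, and by the defect formula this forces the extension to be trivial, as $L$ is already henselian with $p$-divisible value group and perfect residue field with no separable extensions of degree divisible by $p$). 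Hence $L$ is its own maximal purely wild extension over $Z_0$, which is assertion (b).

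For part (c): I want to show $(Z_0,v)$ satisfies (K3), i.e.\ its residue field $Z_0 v$ admits no finite separable extension of degree divisible by $p$. The cleanest route is to note that $(L,v)$ is an algebraically maximal Kaplansky field, so by Proposition~\ref{p1} it admits no finite extension of degree divisible by $p$; I would like to transfer this to $Z_0$. Since $(L|Z_0,v)$ is purely wild (part (b)) and $(Z_0|K,v)$ is tame (part (a)), and since residue fields of tame extensions are separable over the base residue field while purely wild extensions induce no separable residue extension, a finite separable extension of $Z_0 v$ of degree divisible by $p$ would lift — using that $(Z_0,v)$ is henselian (it lies in the henselian $L$, or equally it is a tame, hence defectless henselian, extension of the henselian $K$) — to a tame extension of $(Z_0,v)$, hence a tame extension contained in $Z$; but then it would be contained in $L$ by maximality of $Z_0$ in part (a) — contradiction unless it is trivial. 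Alternatively, and perhaps more directly: $Z_0 v = L v$ if one can show the purely wild extension $L|Z_0$ is immediate; indeed for Kaplansky fields maximal purely wild extensions are immediate (stated in the Preliminaries after Proposition~\ref{Zmaxte}), and then (K3) for $Z_0 v = Lv$ follows from (K2)+(K3) for $(L,v)$, which hold by Lemma~\ref{amKf=tame} and the definition of Kaplansky field.

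The main obstacle I anticipate is part (c), specifically pinning down exactly why (K3) descends from $L$ to $Z_0$ without accidentally needing (K3) to already hold for $Z_0$ in the argument. The safest formulation is the immediacy route: establish (or cite from the Preliminaries) that the maximal purely wild extension $(L|Z_0,v)$ is immediate, so $Z_0 v = Lv$; then (K3) for $Z_0$ is literally (K3) for $L$, which is part of $(L,v)$ being a Kaplansky field. If instead one argues via lifting a separable residue extension, one must be careful that such a lift lands inside a tame extension of $Z_0$ contained in $Z$ (using henselianity of $Z_0$ and Proposition~\ref{Zmaxte}), and then contradicts the maximality in (a); the bookkeeping there is where the proof could go wrong, so I would prefer to lead with the immediacy argument and mention the lifting argument only as a remark.
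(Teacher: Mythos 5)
Your treatment of parts a) and b) is essentially the paper's: a) is a direct application of Lemma~\ref{Zcap}, and b) follows because $(L,v)$ is a tame field by Lemma~\ref{amKf=tame}, hence admits no proper purely wild extension (a nontrivial algebraic extension of a tame field is tame, and a tame extension that is purely wild must be trivial by linear disjointness from itself). The middle of your argument for b) (``$M\cdot Z$ completes $Z_0\sep$'') is garbled, and the parenthetical appeal to Proposition~\ref{p1} quietly uses the unproved fact that nontrivial purely wild extensions have degree divisible by $p$; the clean justification is the one just given, directly from tameness of $L$.

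Part c) is where there is a genuine gap. Your preferred ``immediacy route'' does not work, for two reasons. First, it is circular: the statement you cite from the Preliminaries (maximal purely wild extensions are immediate) is about purely wild extensions \emph{of a Kaplansky field}, and here the base of the extension is $Z_0$, which is not known to be a Kaplansky field --- indeed (K3) for $Z_0$ is exactly what part c) asserts, and (K1) for $Z_0$ can genuinely fail. Second, the immediacy claim itself is false in this generality: take $K=\Q_p$ and $L$ an algebraic closure of $\Q_p$ (an algebraically maximal Kaplansky field). Then $Z_0=Z$ has value group the prime-to-$p$ divisible hull of $\Z$, while $vL=\Q$, so $(L|Z_0,v)$ is not immediate; moreover, when $Kv$ is imperfect, $Lv|Z_0v$ can be a nontrivial (purely inseparable) extension, so one cannot even conclude $Z_0v=Lv$ in general. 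Your fallback lifting argument has the defect you yourself flagged: ``then it would be contained in $L$ by maximality of $Z_0$'' is a non sequitur, since the maximality in a) only concerns tame extensions of $K$ that are already inside $L$; it does not force the lifted extension $Z_0'$ of $Z_0$ into $L$. Two correct repairs: (i) the paper's argument --- since $(L|Z_0,v)$ is purely wild, $Lv|Z_0v$ is purely inseparable, so a finite separable extension of $Z_0v$ of degree divisible by $p$ would be linearly disjoint from $Lv|Z_0v$ and hence yield a finite separable extension of $Lv$ of degree divisible by $p$, contradicting (K3) for $L$; or (ii) keep your lift $Z_0'|Z_0$, observe that it is tame and hence linearly disjoint from the purely wild extension $L|Z_0$, so $[L\cdot Z_0':L]=[Z_0':Z_0]$ is divisible by $p$, contradicting Proposition~\ref{p1} applied to $(L,v)$. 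As written, however, your proof of c) does not go through.
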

\begin{proof}
The assertions are trivial if $\chara Kv=0$ (in which case $Z_0=L$), so let us assume again that
$\chara Kv=p>0$.

It follows from Lemma~\ref{Zcap} that $(Z_0,v)$ is a tame extension of $(K,v)$ maximal with respect to
being contained in $L$, and that $(L|Z_0,v)$ is purely wild. In fact, $(L,v)$ is a maximal purely wild
extension of $(Z_0,v)$ since by Lemma~\ref{amKf=tame} it is a tame field.
Since $(L|Z_0,v)$ is purely wild, the extension of the respective residue fields is purely inseparable.
As $L$ satisfies (K3), the same is consequently true for $Z_0\,$.
\end{proof}

\begin{proposition}                                    \label{prop3}
Take an algebraically maximal Kaplansky field $(L,v)$ and a subfield $K$ such that $L|K$ is algebraic. Then
$(L,v)$ contains:
\sn
a) \ a maximal immediate algebraic extension of $(K,v)$,
\n
b) \ a henselization $(K^h,v)$ of $(K,v)$ and a maximal purely wild extension of $(K^h,v)$.
\end{proposition}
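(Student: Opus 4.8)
The plan is to reduce everything to the henselization and then invoke Proposition~\ref{pZ} together with Lemma~\ref{Zcap}. First I would fix an extension of $v$ to the algebraic closure $\widetilde{K}$ of $K$; since $L|K$ is algebraic we may regard $L$ as sitting inside $\widetilde{K}$, and we may take the henselization $(K^h,v)$ to lie inside $(L,v)$ as well — this is legitimate because $(L,v)$ is henselian (being algebraically maximal, by Lemma~\ref{amKf=tame}, or already by the remark that algebraically maximal fields are henselian), so $L$ contains a henselization of $K$. This establishes the first half of part~b) and also lets us replace $K$ by $K^h$ throughout: immediate algebraic extensions of $K$ and of $K^h$ coincide up to the (immediate) henselization, so a maximal immediate algebraic extension of $K^h$ inside $L$ is also one of $K$.

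Next, working with $K^h$ in place of $K$, set $Z$ equal to the absolute ramification field of $(K^h,v)$ and $Z_0:=Z\cap L$. Proposition~\ref{pZ} applies verbatim (its hypotheses are exactly that $(L,v)$ is an algebraically maximal Kaplansky field and $K^h$ is a henselian subfield with $L|K^h$ algebraic): it gives that $(Z_0,v)$ is the maximal tame extension of $(K^h,v)$ inside $L$, that $(L,v)$ is a maximal purely wild extension of $(Z_0,v)$, and that (K3) holds for $(Z_0,v)$. For part~b) this already finishes the job once I observe that $(L,v)$, being purely wild over $Z_0$ and maximal purely wild there, is in particular a maximal purely wild extension of $(K^h,v)$: any purely wild extension of $K^h$ inside $L$ is linearly disjoint from $Z\supseteq Z_0$, hence purely wild over $Z_0$, hence (by maximality over $Z_0$) equal to $L$; and $(L|K^h,v)$ itself is purely wild because $L\cap Z=Z_0$ is the maximal tame subextension, so $L|K^h$ is linearly disjoint from $Z|K^h$ over... here one has to be a little careful, and I expect this linear-disjointness bookkeeping to be the main technical point.

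For part~a), the key remark is that for Kaplansky fields maximal purely wild extensions and maximal immediate algebraic extensions coincide — this is stated in the Preliminaries (after Proposition~\ref{Zmaxte}): over a henselian Kaplansky base, a maximal purely wild extension is at the same time a maximal immediate algebraic extension. Since $(Z_0,v)$ satisfies (K1), (K2) (it is tame over the henselian $K^h$, in fact a tame field as a subfield... more precisely it is henselian and defectless with perfect residue field admitting no $p$-divisible ramification) and now also (K3) by Proposition~\ref{pZ}c), $(Z_0,v)$ is a henselian Kaplansky field, so the maximal purely wild extension $(L|Z_0,v)$ is a maximal immediate algebraic extension of $(Z_0,v)$. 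It remains to pass from $Z_0$ back down to $K$: a maximal immediate algebraic extension of $Z_0$ need not be immediate over $K$ since $Z_0|K^h$ is tame and generally not immediate. The fix is to take, inside $L$, a maximal immediate algebraic extension $(K^{\mathrm{im}},v)$ of $(K^h,v)$ directly — its existence is exactly what we must prove — by noting that the compositum of $K^{\mathrm{im}}$ with $Z_0$ inside $L$ is tame over $Z_0$ (immediate extensions are, trivially, tame when the base already forces (K1),(K2)) hence contained in the maximal tame extension, forcing $K^{\mathrm{im}}\subseteq Z_0$...

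which is false in general, so instead I would argue as follows. Let $(M,v)$ be any maximal immediate algebraic extension of $(K^h,v)$ (these exist abstractly by Zorn's Lemma); by the Grauert--Remmert / Kaplansky uniqueness theorem for Kaplansky fields, $M$ is unique up to $K^h$-isomorphism, and $(M,v)$ is a maximal purely wild extension of $(K^h,v)$. By Proposition~\ref{Zmaxte} and Lemma~\ref{Zcap} applied with $M$ in the role of $L$, $M$ is linearly disjoint from $Z$ over $K^h$, so $M\cap Z=K^h$. Now both $L$ and $M$ are algebraic extensions of $K^h$ inside $\widetilde{K}$; I claim $M$ embeds into $L$ over $K^h$. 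Indeed $(L,v)$ is algebraically maximal, hence contains a copy of every maximal immediate algebraic extension of each of its henselian subfields whose algebraic closure... — circular again. The clean route: apply part~a) abstractly to $Z_0$, getting that $(L,v)$ is a maximal immediate algebraic extension of $(Z_0,v)$, and then separately take the maximal immediate algebraic extension of $(K^h,v)$ \emph{within} $L$ as $Z_0^{\,\mathrm{im}}:=$ the relative algebraic closure of $K^h$ in the perfect hull intersected appropriately — at this point the honest statement is that \emph{this is precisely the content that Proposition~\ref{p2} is designed to supply}, and the intended proof almost certainly runs: relative-algebraic-closure arguments (Proposition~\ref{p2}) show the sought extension sits inside $L$, while Proposition~\ref{pZ} identifies it explicitly. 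I would therefore structure the final write-up as: (i) put $K^h\subseteq L$; (ii) invoke Proposition~\ref{pZ} for b) and the explicit $Z_0$; (iii) for a), combine "maximal purely wild $=$ maximal immediate algebraic over a henselian Kaplansky field" with a descent argument from $Z_0$ to $K^h$ using that $L|K^h$, being algebraic into an algebraically maximal field, contains \emph{some} maximal immediate algebraic extension of $K^h$ because the relative algebraic closure of $K^h$ in $L$ is, by Lemma~\ref{hrach} and Proposition~\ref{p2}, itself an algebraically maximal Kaplansky field, and an algebraic extension of $K^h$ that is algebraically maximal with the same value group and residue field as $K^h$'s maximal immediate extension must \emph{be} that maximal immediate extension. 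The main obstacle, as the repeated false starts above show, is precisely keeping the two orthogonal filtrations — the tame/purely-wild one and the immediate/non-immediate one — coherent; the resolution is that over $Z_0$ (not over $K^h$) these two filtrations align, which is why Proposition~\ref{pZ}c), establishing (K3) for $Z_0$, is the linchpin.
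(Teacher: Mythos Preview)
Your proposal does not reach a correct proof; the repeated false starts you flag are not just bookkeeping issues but reflect a genuine wrong turn in the setup.

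For part~b) you try to show that $L$ itself is a maximal purely wild extension of $K^h$. This is false in general: $L\cap Z=Z_0$, and whenever $Z_0\ne K^h$ the extension $L|K^h$ contains the nontrivial tame piece $Z_0|K^h$ and is therefore \emph{not} purely wild (linear disjointness from the Galois extension $Z|K^h$ is equivalent to $L\cap Z=K^h$, not to $L\cap Z=Z_0$). Your attempted patch, that any purely wild extension of $K^h$ inside $L$ must equal $L$, is likewise false ($K^h$ itself is such an extension). For part~a) the appeal to Proposition~\ref{p2} is vacuous here because $L|K$ is algebraic, so the relative algebraic closure of $K^h$ in $L$ is $L$; and the uniqueness argument you sketch (``take $M$ abstractly and embed it into $L$'') is, as you note, circular without an additional input.

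The paper's argument supplies exactly the missing idea, and it hinges on taking the ramification field relative to a \emph{different} base. One fixes $K'$ to be an immediate (respectively, purely wild) extension of $K^h$ maximal with the property of lying inside $L$, and then sets $Z$ to be the absolute ramification field of $K'$, with $Z_0=Z\cap L$. If $K'$ admitted a nontrivial immediate (resp.\ purely wild) extension $K'(a)$, this extension is purely wild over $K'$, hence $Z_0(a)|Z_0$ is purely wild. Now Proposition~\ref{pZ} (applied to $K'$!) gives that $Z_0$ satisfies (K3) and that $L$ is a maximal purely wild extension of $Z_0$; the uniqueness theorem \cite[Proposition~3.2]{[KPR]} then embeds $Z_0(a)$ into $L$ over $Z_0$, producing an $a'\in L$ with $(K'(a')|K',v)$ immediate (resp.\ purely wild) and nontrivial, contradicting maximality of $K'$. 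The point you were missing is that the linchpin (K3) for $Z_0$, and the identification of $L$ as maximal purely wild over $Z_0$, must be established over $K'$ rather than over $K^h$.
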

\begin{proof}
As before, the assertions are trivial if $\chara Kv=0$, so let us assume
again that $\chara Kv=p>0$. Since the algebraically maximal field $(L,v)$ is henselian, it
contains a henselization $K^h$ of $(K,v)$, and this is an immediate algebraic extension of $(K,v)$. Thus we
may from now on assume that $(K,v)$ itself is henselian.

\pars
For the proof of part a), we take $(K',v)$ to be an immediate extension of $(K,v)$, maximal with respect
to being contained in $L$. Then $(K',v)$ is henselian since it is an algebraic extension of the henselian
field $(K,v)$. We set $Z_0:=Z\cap L$, where $Z$ is the absolute ramification field of $(K',v)$.
Suppose that $(K',v)$ is not algebraically maximal. Then there exists a nontrivial immediate
algebraic extension $(K'(a)|K',v)$. Since it is purely wild, it is linearly disjoint from the
tame extensions $Z|K'$ and $Z_0|K'$, and it follows that $Z_0(a)|Z_0$ is also linearly disjoint from
$Z|Z_0\,$. By Lemma~\ref{Z}, $Z$ is also the absolute ramification field of $Z_0\,$, so this shows that
$(Z_0(a)|Z_0,v)$ is purely wild. Since by Proposition~\ref{pZ},
$Z_0$ satisfies (K3), all maximal purely wild extensions of $(Z_0,v)$ are isomorphic
over $Z_0$ by Proposition~3.2 of \cite{[KPR]}. As $(L,v)$ is a maximal purely wild extension of $(Z_0,v)$
by Proposition~\ref{pZ}, it follows that $Z_0(a)$ admits an embedding in $L$ over $Z_0\,$. Hence there
is some $a'\in L$ such that $Z_0(a)$ and $Z_0(a')$ are isomorphic over $Z_0\,$. This isomorphism induces
an isomorphism $K'(a)\simeq K'(a')$ over $K'$. Since $(K',v)$ is henselian, this isomorphism preserves the
valuation, so $(K'(a')|K',v)$ is a nontrivial immediate algebraic extension contained in $L$. As this
contradicts the maximality of $(K',v)$, we find that $(K',v)$ must be algebraically maximal.

\pars
In order to prove part b), take $(K',v)$ to be a purely wild extension of $K$, maximal with respect
to being contained in $K$. Again, $(K',v)$ is henselian. By the same proof as before, just replacing
``immediate'' by ``purely wild'', one shows that $(K',v)$ is a maximal purely wild extension of $(K,v)$.
\end{proof}

\sn
{\it Proof of Theorem~\ref{MT}.}
In order to prove the first part of the theorem, we use Proposition~\ref{p2} to replace $L$ by the relative
algebraic closure of $K$ in $L$. Then the assertions follow from Proposition~\ref{prop3}.
\pars
For the proof of the second part of the theorem, assume that $(L,v)$ is maximal, and take an immediate
extension $(K',v)$ of $(K,v)$, maximal
with respect to being contained in $L$. By the first part of our theorem, the relative algebraic closure
of $K'$ in $L$ contains a maximal immediate algebraic extension of $(K',v)$; by the maximality of $K'$, it
must be equal to $K'$. Hence $(K',v)$ does not admit any nontrivial immediate algebraic extensions.

Suppose that $(K',v)$ is not maximal. Then by \cite[Theorem 4]{[Ka]}, $(K',v)$ admits a pseudo Cauchy
sequence without a
limit in $K'$. This must be of transcendental type, because if it were of algebraic type, then by
\cite[Theorem 3]{[Ka]}, there would exist a nontrivial immediate algebraic extension of $(K',v)$,
contradicting its algebraic maximality. Since the pseudo Cauchy sequence we took in $K'$ is also a pseudo
Cauchy sequence in $(L,v)$ and $(L,v)$ is maximal, \cite[Theorem 4]{[Ka]} shows the existence of a limit
$a$ in $L$. It follows from \cite[Theorem 2]{[Ka]} that $(K'(a)|K',v)$ is an immediate transcendental
extension. As it is contained in $L$, this contradicts the maximality of $(K',v)$, We have now proved that
$(K',v)$, is a maximal immediate extension of $(K,v)$.
\qed

\end{document}